\newtheorem {theorem}{Theorem}
\newtheorem {lemma}[theorem]{Lemma}
\theoremstyle{definition}
\theoremstyle{theorem}
\newcommand{\PHT}{\operatorname{PHT}}
\def\ba{\begin{array}}
\def\ea{\end{array}}
\def\bea{\begin{eqnarray} \label}
\def\eea{\end{eqnarray}}
\def\be{\begin{equation} \label}
\def\ee{\end{equation}}
\def\bit{\begin{itemize}}
\def\eit{\end{itemize}}
\def\ben{\begin{enumerate}}
\def\een{\end{enumerate}}
\def\RR{\mathbb{R}}
\def\r{\varrho}
\def\ph{\varphi}
\def\L{\Lambda}
\def\bE{\mathbf{E}}
\def\bP{\mathbf{P}}
\def\bQ{\mathbf{Q}}
\def\bR{\mathbf{R}}
\def\cH{\mathcal{H}}
\def\cP{\mathscr{P}}
\def\cS{\mathcal{S}}
\def\cMP{\mathcal{MP}}
\def\MP{\textup{MP}}
\def\F{\textup{F}}
\def\M{\textup{M}}
\def\dint{\textup{d}}
\def\lk{\langle}
\def\rk{\rangle}
\begin{document}

\title{\bfseries Birth-time distributions of weighted polytopes in STIT tessellations}

\author{Nguyen Ngoc Linh\footnotemark[1]\, and Christoph Th\"ale\footnotemark[2]}

\date{}
\renewcommand{\thefootnote}{\fnsymbol{footnote}}
\footnotetext[1]{Institut f\"ur Stochastik der Friedrich-Schiller-Universit\"at Jena, Ernst-Abbe-Platz 2, 07743 Jena, Germany. E-mail: linh.nguyen@uni-jena.de}

\footnotetext[2]{
Institut f\"ur Mathematik der Universit\"at Osnabr\"uck, Albrechtstra\ss e 28a, 49076 Osnabr\"uck, Germany. E-mail: christoph.thaele@uni-osnabrueck.de}

\maketitle

\begin{abstract}
The lower-dimensional maximal polytopes associated with an iteration stable (STIT) tessellation in $\RR^d$ are considered. They arise in the spatio-temporal construction process of such a tessellation as intersections of $(d-1)$-dimensional maximal polytopes. A precise description of the joint distribution of their birth-times is obtained. This in turn is used to determine the probabilities that the typical or the length-weighted typical maximal segment of the tessellation contains a fixed number of internal vertices.
\bigskip
\\
{\bf Keywords}. {Birth-time, internal vertex, iteration/nesting, marked point process, maximal segment, Poisson process, random polytope, STIT tessellation, stochastic geometry, stochastic stability, weighted polytope.}\\
{\bf MSC}. Primary  60D05; Secondary 60G55, 60J75.
\end{abstract}

\section{Introduction and results}

Random tessellation theory is an active field of current mathematical research. Besides theoretical developments, there are numerous applications of random tessellations for example in the study of random structures in biology, geology and other sciences; cf.~\cite{Beil0,Lau,OkaBooSugChi00,Red,SKM}. Apart from the classical models, such as the well known Poisson hyperplane or Poisson-Voronoi tessellations for which we refer to \cite{Calka,OkaBooSugChi00,SW,SKM}, the class of iteration stable (or STIT) tessellations has attracted considerable interest in recent times; see \cite{MNW,MNW11,NW05,STBern,STAOP,T10,TWN} and the references cited therein. In particular and as discussed in \cite{NMOW}, the STIT tessellations may serve as a reference model for hierarchical spatial cell-splitting and crack formation processes in natural sciences and technology, for example to describe geological or material phenomena or aging processes of surfaces.

Intuitively and within a compact convex polytope $W\subset\RR^d$ (we assume that $d\geq 2$ in this paper) with positive volume, their construction can be described as follows. At first, the window $W$ is equipped with a random lifetime. When the lifetime of $W$ runs out, we choose a hyperplane $H$, which divides $W$ into two non-empty sub-polytopes $W^+$ and $W^-$. Now, the construction continues independently and recursively in $W^+$ and $W^-$ until some fixed time threshold $t>0$ is reached; see Figure \ref{fig1}. The outcome $Y_W(t)$ of this algorithm is a random subdivision (tessellation) of $W$ into polytopes (called cells in the sequel).
\begin{figure}[t]
\begin{center}
 \includegraphics[width=11cm]{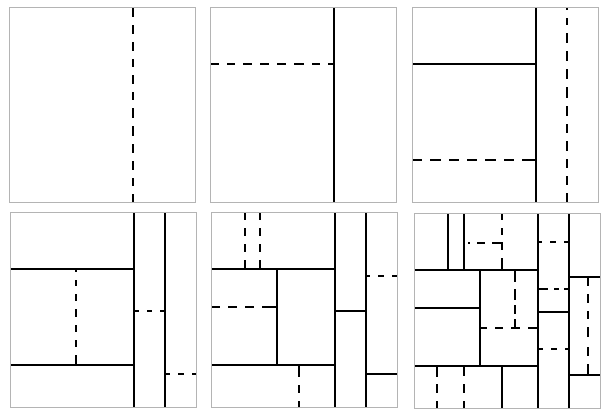}
 \caption{States of the random cell division process at different time instants; the respective new segments are dashed. Here, $W$ is a square.}\label{fig1}
\end{center}
\end{figure}
To be more precise, we have to specify the lifetime distribution of the cells and the law of the cell-separating hyperplanes. For this purpose, let us write $\cH$ for the space of affine hyperplanes and $\cH_0$ for the subspace of linear hyperplanes in $\RR^d$. Furthermore, let $\L$ be a measure on $\cH$, which admits the decomposition
\begin{equation}\label{eq:decomplambda}
\int_{\cH}f(H)\,\L(\dint H) = \int_{\cH_0}\int_{H_0^\perp}f(H_0+x)\,\ell_{H_0^\perp}(\dint x)\,\bQ(\dint H_0),
\end{equation}
where $f$ is a non-negative measurable function on $\cH$, $\ell_{H_0^\perp}$ is the Lebesgue measure on $H_0^\perp$ and $\bQ$ is a probability measure on $\cH_0$. We require that $\bQ$ is non-degenerate in the sense that ${\rm span}\{{n}(H):H\in{\rm supp}\,\bQ\}=\RR^d$, where ${n}(H)$ is the unit normal of $H$ lying (for definiteness) in the upper unit half-sphere. For a polytope $c$ write $\lk c\rk=\{H\in\cH:c\cap H\neq\emptyset\}$ for the collection of hyperplanes hitting $c$. Now, the lifetime of a cell $c$ in the above construction is chosen to be exponentially distributed with mean $\L(\lk c\rk)^{-1}$ (if $\bQ$ is the uniform distribution on $\cH_0$, $\L(\lk c\rk)$ is just a dimension dependent multiple of the mean width of $c$). Moreover, we choose the hyperplane splitting $c$ according to the (conditional) law $\Lambda(\,\cdot\,|\lk c\rk)$. It is exactly this choice, which makes mathematical analysis of the so-constructed tessellations possible and completes the description of the above algorithm; see \cite{NW05} for more details. In particular, we notice that the exponential lifetimes of the cells ensure that the construction enjoys the Markov property in the continuous time parameter $t$.

Besides looking at the local tessellation $Y_W(t)$ within $W$, it is convenient to extend $Y_W(t)$ to a whole space random tessellation $Y(t)$ in such a way that for any $W$ as above, $Y(t)$ restricted to $W$ has the same distribution as the previously constructed $Y_W(t)$ (this is possible by consistency according to the main result in \cite{NW05}). We call $Y(t)$ a \textbf{STIT tessellation} of $\RR^d$ since $Y(t)$ enjoys a stochastic \underline{st}ability under \underline{it}erations as explained later (that $Y(t)$ is in fact a tessellation is ensured by the non-degeneracy assumption on $\bQ$). It is important that the form \eqref{eq:decomplambda} of $\L$ implies that the distribution of $Y(t)$ is invariant under spatial translations, i.e., the shifted tessellation $Y(t)+x$ has the same distribution as $Y(t)$ for any $x\in\RR^d$.

With $Y(t)$, a number of geometric objects are associated. To introduce them, we write $\cMP_{d-1}^{(t)}$ for the family of cell-splitting hyperplane pieces that are introduced during the recursion steps in the above algorithm until time $t$ (these are the dashed segments in Figure \ref{fig1}). More generally, for $k=0,\ldots,d-2$ we denote by $\cMP_k^{(t)}$ the family of $k$-dimensional faces of members of $\cMP_{d-1}^{(t)}$. For $k=0,\ldots,d-1$ we call $\cMP_k^{(t)}$ the class of \textbf{$k$-dimensional maximal polytopes} of $Y(t)$. They are the natural building blocks of $Y(t)$ and its lower-dimensional face-skeletons. We also consider $k$-dimensional weighted maximal polytopes, where the intrinsic volumes $V_j$, $0\le j \le k$, constitute the weights. To define them, fix $k\in\{0,\ldots,d-1\}$ and $j\in\{0,\ldots,k\}$, let $m(p)$ be the circumcenter of a polytope $p$, write $\cP_k^o$ for the (measurable) space of $k$-polytopes with circumcenter at the origin $o$ and let $B_r^d$ be the centered ball in $\RR^d$ with radius $r>0$. Now, we introduce a probability measure $\bP_{k,j}^{(t)}$ on $\cP_k^o$ as follows:
\begin{equation}\label{eq:defweightedpolytope}
 \bP_{k,j}^{(t)}(A) :=\lim\limits_{r\rightarrow\infty}\frac{\bE\sum\limits_{p\,\in\cMP_k^{(t)}}{\bf 1}\{p\subset B_r^d\}\,{\bf 1}\{p-m(p)\in A\}\,V_j(p)}{\bE\sum\limits_{p\,\in\cMP_k^{(t)}}{\bf 1}\{p\subset B_r^d\}\,V_j(p)},
\end{equation}
where $A$ is a Borel subset of $\cP_k^o$ (following the proof of Equations (10) and (11) in \cite{Schneider09}, it can be shown that the limit is well-defined; another argument can be given by using \eqref{eq:ProofGeneralBirthTime1} below). A random polytope with distribution $\bP_{k,j}^{(t)}$ is called a \textbf{$V_j$-weighted typical $k$-dimensional maximal polytope} of $Y(t)$ and will henceforth be denoted by $\MP_{k,j}^{(t)}$. If $j=0$, this is the typical $k$-dimensional maximal polytope and  for $j=k$ we obtain the volume-weighted typical $k$-dimensional maximal polytope of the STIT tessellation $Y(t)$, which are two classical objects considered in stochastic geometry; cf.~\cite{Calka,Schneider09,SW}. For example, $\MP_{1,0}^{(t)}$ is the \textbf{typical maximal segment}, whereas $\MP_{1,1}^{(t)}$ is the \textbf{length-weighted typical maximal-segment}. However, our approach here is more general and interpolates between the typical ($j=0$) and the volume-weighted typical ($j=k$) maximal polytope of dimension $k$.

Any $k$-dimensional maximal polytope of $Y(t)$ is by definition the intersection of $d-k$ maximal polytopes of dimension $d-1$. In view of the spatio-temporal construction described above, each of these polytopes has a well-defined random birth-time. We denote these random variables by $\beta_1,\ldots,\beta_{d-k}$ and order them in such a way that $0<\beta_1<\ldots<\beta_{d-k}<t$ holds almost surely. Our first result describes the joint distribution of these birth-times; in the special cases $d=2$ or $d=3$, $k=1$ and $j=0$ or $j=1$, the formula is known from \cite{MNW11,T10,TWN}.

\begin{theorem}\label{thm:GeneralBirthTime}
Given $d\geq 1$, $k\in\{0,\ldots,d-1\}$ and $j\in\{0,\ldots,k\}$. The joint distribution of the birth-times $\beta_1,\ldots,\beta_{d-k}$ of the $V_j$-weighted typical $k$-dimensional maximal polytope of the STIT tessellation $Y(t)$ has density
$$(s_1,\ldots,s_{d-k})\mapsto (d-j)(d-k-1)!{s_{d-k}^{k-j}\over t^{d-j}}\,{\bf 1}\{0<s_1<\ldots<s_{d-k}<t\}$$
with respect to the Lebesgue measure on the $(d-k)$-dimensional simplex $\{0<s_1<\ldots<s_{d-k}<t\}$, which is independent of the hyperplane measure $\Lambda$. In particular, if $j=k$ we obtain the uniform distribution.
\end{theorem}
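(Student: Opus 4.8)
The plan is to compute the weighted intensity in the numerator of \eqref{eq:defweightedpolytope} directly via the spatio-temporal construction, keeping track of the $d-k$ birth-times. A $k$-dimensional maximal polytope $p$ is the intersection of $d-k$ maximal $(d-1)$-polytopes $M_1,\ldots,M_{d-k}$, born successively at times $\beta_1<\cdots<\beta_{d-k}$. I would encode this recursively: at time $\beta_1$ a first hyperplane piece $M_1$ is dropped into some cell of $Y(\beta_1-)$; it then gets subdivided by later generations, and $M_2$ is a maximal piece lying inside $M_1$ born at time $\beta_2$ into a cell of the tessellation \emph{induced on $M_1$}, which is itself a $(d-1)$-dimensional STIT tessellation driven by the trace of $\Lambda$ on $M_1$; and so on down to dimension $k$. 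The crucial structural input is this \textbf{nesting/consistency property} of STIT tessellations (the ``ST'' in STIT): the process of cell-divisions occurring strictly inside a given maximal polytope of dimension $m$, observed after its birth-time $b$, is, conditionally, a STIT process of dimension $m$ with the induced hyperplane measure, restarted at time $b$. This lets me peel off one dimension at a time.

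Concretely, I would first establish the one-step identity: for the top-dimensional maximal polytopes, a Mecke-type / Slivnyak computation for the underlying Poisson process of cell-division events gives that the expected number of maximal $(d-1)$-polytopes with a marked point in a region, born in $[s_1,s_1+\dint s_1]$, inside a reference cell, factorizes as (rate of new hyperplanes into that cell) $\times \dint s_1$, with the cell itself distributed as the time-$s_1$ STIT cell; the exponential lifetimes make this clean and $\Lambda$-independent after normalization. Iterating this $d-k$ times, the joint birth-time density acquires one factor $\dint s_i$ per level, and — this is the key combinatorial point — at each level $i$ the relevant cell lives inside a maximal polytope of dimension $d-i$, so the ``survival'' factor (the probability that no hyperplane has cut the relevant face before the next birth, times the geometry of where the lower piece sits) contributes a polynomial weight. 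Assembling these, together with the intrinsic-volume weight $V_j(p)$, I expect the unnormalized joint density of $(\beta_1,\ldots,\beta_{d-k})$ restricted to $\{0<s_1<\cdots<s_{d-k}<t\}$ to come out proportional to $s_{d-k}^{\,k-j}\,(t - s_{d-k})^{0}$ — i.e. the dependence collapses onto $s_{d-k}$ alone, with exponent $k-j$ from the $V_j$-weighting of a $k$-polytope born at time $s_{d-k}$ (scaling considerations: a $k$-face born at time $s$ has ``size'' of order $s^{-1}$, so $V_j$ scales like $s^{-j}$, and there is a compensating $s^{k}$-type volume factor from the $k$-dimensional cell it lives in, giving net $s^{k-j}$). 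Everything else integrates to a constant.

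The normalization is then a pure calculus exercise:
\begin{equation}\label{eq:normalize}
\int_{0<s_1<\cdots<s_{d-k}<t} s_{d-k}^{\,k-j}\,\dint s_1\cdots\dint s_{d-k} = \frac{1}{(d-k-1)!}\int_0^t \frac{s^{\,k-j}\,s^{\,d-k-1}}{1}\,\dint s = \frac{1}{(d-k-1)!}\cdot\frac{t^{\,d-j}}{d-j},
\end{equation}
so dividing by this reproduces the stated constant $(d-j)(d-k-1)!/t^{\,d-j}$, and for $j=k$ the integrand is $1$, giving the uniform distribution on the simplex, as claimed. The cases $d=2,3$ with $k=1$ recorded in \cite{MNW11,T10,TWN} serve as a consistency check on the bookkeeping.

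The main obstacle will be making the iteration of the Mecke/nesting step fully rigorous: one must argue carefully that conditioning on the birth of $M_i$ and on its geometry, the subsequent division process inside it really is a fresh STIT process in dimension $d-i$ (so that the induction hypothesis applies), and that the circumcenter-based ``typical polytope'' normalization \eqref{eq:defweightedpolytope} is compatible with this peeling — i.e. that the $r\to\infty$ limit and the expectations can be interchanged with the recursive decomposition. I would organize this by first proving an integral-geometric master formula for $\bE\sum_{p\in\cMP_k^{(t)}}(\cdots)$ in terms of the construction's Poisson driving process (this is essentially the content behind \eqref{eq:ProofGeneralBirthTime1} referenced in the excerpt), and only then specializing. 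Tracking the $\Lambda$-dependence and seeing it cancel in the ratio is the place where one must be most attentive, but since the lifetime of every cell is exponential with parameter $\Lambda(\langle\,\cdot\,\rangle)$ and the splitting hyperplane is $\Lambda$-conditioned, all $\Lambda$-factors appear as matching numerator/denominator pairs.
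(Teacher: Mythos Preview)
Your recursive dimension-peeling plan is a genuinely different route from the paper's. The paper proceeds in two stages. It first settles the volume-weighted case $j=k$ directly, by invoking iteration stability in the form $Y(t)\overset{d}{=}Y(s_1)\boxplus Y(s_2-s_1)\boxplus\cdots\boxplus Y(t-s_{d-k})$ and applying intersection formulae for surface processes to compute the $k$-volume density of those $k$-dimensional maximal polytopes whose $(d-1)$-plates come from the prescribed time-slabs; this density is proportional to $\prod_i(s_i-s_{i-1})$, and differentiation plus normalization gives the uniform law on the simplex. For general $j$ the paper does \emph{not} redo any recursive construction: it uses Neveu's exchange formula to write $\bE f(\MP_{k,j}^{(t)})$ as a $V_jV_k^{-1}$-reweighting of $\MP_{k,k}^{(t)}$, and a separate lemma --- that the conditional law of $\MP_{k,k}^{(t)}$ given $(\beta_1,\ldots,\beta_{d-k})$ coincides with that of $\F_{k,k}^{\PHT(\beta_{d-k})}$ and hence depends only on the \emph{last} birth-time --- combined with the PHT scaling \eqref{eq:PHTScaling} yields $\bE[V_jV_k^{-1}\mid\beta_1,\ldots,\beta_{d-k}]=c\,s_{d-k}^{k-j}$ rigorously. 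What your approach would buy, if completed, is a self-contained picture built from the construction; what the paper's approach buys is that both structural inputs (iteration stability, the STIT--PHT link of Lemma~\ref{lem:Vjbirthtime}) are already on the shelf and no new conditional-STIT statements need to be proved.

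Two of your load-bearing steps are genuine gaps rather than routine details. First, your nesting claim --- that after $M_1$ is born, the division process induced on $M_1$ is a fresh $(d{-}1)$-dimensional STIT restarted at $\beta_1$ --- is only recorded in the paper for \emph{fixed} affine subspaces (the ``STIT intersections'' paragraph); promoting it to the \emph{random} plate $M_1$ under Palm conditioning, with the correct induced hyperplane measure and the correct time-shift, is precisely the kind of statement the paper sidesteps by working with the $\boxplus$-decomposition of $Y(t)$ instead of peeling inside individual plates. Second, your derivation of the exponent $k-j$ is a scaling heuristic (``size of order $s^{-1}$, so $V_j$ scales like $s^{-j}$, compensating $s^k$ factor''), not a computation; the paper obtains it from the exact identity $\bE[V_j(\F_{k,k}^{\PHT(s)})V_k(\F_{k,k}^{\PHT(s)})^{-1}]=s^{k-j}\bE[V_j(\F_{k,k}^{\PHT(1)})V_k(\F_{k,k}^{\PHT(1)})^{-1}]$, which is homogeneity of intrinsic volumes plus $s\,\PHT(s)\overset{d}{=}\PHT(1)$. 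Your normalization integral is correct and is also how the paper pins down the leading constant.
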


We turn now to an application of Theorem \ref{thm:GeneralBirthTime}, where we consider the typical and the length-weighted typical maximal segment $\MP_{1,0}^{(t)}$ and $\MP_{1,1}^{(t)}$ of $Y(t)$, respectively. These segments may have internal vertices, which arise at the time of birth of the segment (when $d\ge 3$) and thereafter subject to further subdivision of adjacent cells; see Figure \ref{fig1} for an illustration in the planar case. With the help of Theorem \ref{thm:GeneralBirthTime} we can determine the probabilities $\mathsf{p}_{1,0}(n)$ and $\mathsf{p}_{1,1}(n)$ that the typical or the length-weighted typical maximal segment of $Y(t)$ contains exactly $n\in\{0,1,2,\ldots\}$ internal vertices (we suppress the dependency on $t$ in the notation of these probabilities since they are independent of the time parameter $t$).

\begin{theorem}\label{thm:GeneralInternalVertices} The probabilities $\mathsf{p}_{1,0}(n)$ and $\mathsf{p}_{1,1}(n)$ are given by
$$\mathsf{p}_{1,0}(n)=d(d-2)!\int_0^t\int_0^{s_{d-1}}\!\!\!\!\!\!\!\!\ldots\int_0^{s_2}{s_{d-1}^2\over t^d}{(d\cdot t-2s_{d-1}-s_{d-2}-\ldots-s_1)^n\over(d\cdot t-s_{d-1}-s_{d-2}-\ldots-s_1)^{n+1}}\,\dint s_1\ldots\dint s_{d-1}$$ and
\begin{equation*}
\begin{split}
\mathsf{p}_{1,1}(n)=(n+1)(d-1)!\int_0^t\int_0^{s_{d-1}}\!\!\!\!\!\!\!\!\ldots\int_0^{s_2}{s_{d-1}^2\over t^{d-1}}&{(d\cdot t-2s_{d-1}-s_{d-2}-\ldots-s_1)^n\over(d\cdot t-s_{d-1}-s_{d-2}-\ldots-s_1)^{n+2}}\\ &\dint s_1\ldots\dint s_{d-1}.
\end{split}
\end{equation*}
Moreover, $\mathsf{p}_{1,0}(n)$ and $\mathsf{p}_{1,1}(n)$ are
independent of $t$ and $\L$. In the mean, the typical maximal segment has ${1\over 2}(d^2-d+2)/(d-1)$ internal vertices in dimension $d\geq 2$, whereas the length-weighted typical maximal segment in space dimension $d\geq 3$ has $(d^2-2d+4)/(d-2)$ (the mean is infinite if $d=2$).
\end{theorem}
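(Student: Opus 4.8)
The plan is to reduce the counting of internal vertices on the typical (resp. length-weighted typical) maximal segment to a computation involving only the birth-times $\beta_1,\ldots,\beta_{d-1}$ furnished by Theorem~\ref{thm:GeneralBirthTime} (the case $k=1$). First I would recall the standard STIT description of what happens along a maximal segment after its birth: a maximal segment $s$ of $\MP_{1,\cdot}^{(t)}$ is the intersection of $d-1$ maximal $(d-1)$-polytopes born at times $\beta_1<\ldots<\beta_{d-1}$; at its birth-time $\beta_{d-1}$ it already carries the $d-2$ (or fewer, but generically $d-2$) ``endpoint-type'' internal vertices coming from the lower-dimensional intersections, and thereafter each of the two cells abutting $s$ gets independently and recursively subdivided, new hyperplanes landing on $s$ according to a Poisson process in $t$. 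The key quantitative fact is that, conditionally on the birth-times and on $s$ having a given (finite) length, the number of \emph{new} internal vertices that appear on the relative interior of $s$ during the time window $(\beta_{d-1},t]$ is mixed Poisson: given the accumulated ``hazard'' the count is Poisson, and the intensity measure of the driving Poisson process on the two sides of $s$ is controlled by $\Lambda$ applied to the relevant cells. Summing the $\Lambda$-masses of the two adjacent cells over the construction history telescopes, and — crucially — the length factor cancels against the $\Lambda(\langle\cdot\rangle)$ normalisation, leaving a parameter that depends only on the $\beta_i$ and on $t$, not on $\Lambda$; this is where the independence of $\Lambda$ will come from.

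Next I would carry out the Poisson bookkeeping explicitly. Writing $\mu$ for the total $\Lambda$-intensity accumulated on $s$ up to time $t$ once $s$ is born at $\beta_{d-1}$, a careful accounting of the nesting mechanism (each side contributes, between consecutive subdivision times, a rate proportional to the remaining lifetime length and inversely to $\Lambda(\langle\text{cell}\rangle)$) yields a closed form for $\mu$ in terms of $\beta_1,\ldots,\beta_{d-1}$ and $t$ of shape $\mu = \log\bigl((d\cdot t-\beta_1-\ldots-\beta_{d-1})/(d\cdot t-2\beta_{d-1}-\beta_{d-2}-\ldots-\beta_1)\bigr)$ or the analogous expression — the point being that $\mathsf{p}(n\mid\beta_1,\ldots,\beta_{d-1})$ is the Poisson$(\mu)$ probability $e^{-\mu}\mu^n/n!$, and substituting the logarithmic form of $\mu$ turns $e^{-\mu}$ and $\mu^n$ into the rational and power expressions $(d\cdot t-2s_{d-1}-\ldots-s_1)^n/(d\cdot t-s_{d-1}-\ldots-s_1)^{n+1}$ appearing in the statement. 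For $\mathsf{p}_{1,0}$ I would then integrate this conditional probability against the $j=0$ density from Theorem~\ref{thm:GeneralBirthTime}, namely $d(d-2)!\,s_{d-1}/t^d$ on the simplex (note $k-j=1$, $d-j=d$); for $\mathsf{p}_{1,1}$ I would integrate against the $j=1$ density $(d-1)!\,1/t^{d-1}$ — but here I must additionally weight by the conditional expected length of $s$, which contributes an extra factor, and reconcile the fact that on the length-weighted segment the driving process is size-biased; tracking this size-biasing is what produces the extra $(d\cdot t - s_{d-1}-\ldots-s_1)^{-1}$ denominator and the $(n+1)$ prefactor in $\mathsf{p}_{1,1}(n)$. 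The $t$- and $\Lambda$-independence follows because after the substitution $s_i\mapsto s_i/t$ (or by the scaling $Y(t)\overset{d}{=}tY(1)$ and the $\Lambda$-cancellation already noted) every $t$ drops out.

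Finally, for the mean number of internal vertices I would use $\EE[\text{count}] = \EE[\mu]$ in the unweighted case and $\EE[\text{count}] = \EE_{\text{len-wt}}[\mu]$ in the length-weighted case, computing $\EE[\mu]$ by integrating the logarithmic expression for $\mu$ against the respective birth-time density over the simplex. These are elementary (if slightly tedious) polynomial-logarithmic integrals over $\{0<s_1<\ldots<s_{d-1}<t\}$; after the substitution $u_i=s_i/t$ they become dimension-dependent constants, and one checks they evaluate to $\tfrac12(d^2-d+2)/(d-1)$ and $(d^2-2d+4)/(d-2)$ respectively, with the $d=2$ length-weighted integral diverging (the $\log$ singularity at $s_1\to 0$ is not integrable against the size-biasing weight). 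I expect the main obstacle to be step two: getting the exact form of the accumulated intensity $\mu$ right requires a precise description of how the two adjacent cells evolve under nesting and why the sum of $1/\Lambda(\langle c\rangle)$ times successive lifetime-length increments telescopes into the stated logarithm — in particular handling the length-weighted (size-biased) version, where one must be careful that the ``typical segment with a marked point'' picture still gives an honest Poisson mixture. The earlier theorem does all the heavy lifting on the birth-time side, so once $\mu$ is pinned down the rest is integration.
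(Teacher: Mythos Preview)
Your plan has a genuine gap at the step you yourself flag as the obstacle. Conditionally on the birth-times $\beta_1,\ldots,\beta_{d-1}$ alone, the number of internal vertices is \emph{not} Poisson with some deterministic parameter $\mu$ depending only on the $\beta_i$; and there is no logarithmic choice of $\mu$ for which $e^{-\mu}\mu^n/n!$ equals the rational expression $(d\cdot t-2s_{d-1}-\ldots-s_1)^n/(d\cdot t-s_{d-1}-\ldots-s_1)^{n+1}$ --- if $\mu=\log(A/B)$ then $e^{-\mu}=B/A$ but $\mu^n=(\log(A/B))^n$, not a power of $A$ or $B$. The missing ingredient is that one must also condition on the \emph{length} $\ell=x$ and the \emph{direction} $\varphi=u$ of the segment. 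Given these, the STIT intersection property (a line section of $Y(t)$ is a Poisson point process of intensity $t\Lambda(\langle u\rangle)$) yields that the number of internal vertices is honestly Poisson with parameter $\Lambda(\langle u\rangle)\,x\,(d\cdot t-2s_{d-1}-s_{d-2}-\ldots-s_1)$; the combinatorial bookkeeping here is that during $(s_k,s_{k+1})$ there are $k$ regions behind the first $(d-1)$-polytope $M_1$ contributing vertices, and during $(s_{d-1},t)$ there are $d$ regions (the $d-2$ behind $M_1$ plus the two cells flanking the segment). No telescoping into a logarithm occurs; the parameter is linear in $x$.

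The rational form in the theorem and the $(n+1)$ prefactor then arise not from a Poisson probability but from \emph{integrating out the length}. By Lemma~\ref{lem:independenceKK} with $k=1$, conditionally on $\beta_{d-1}=s_{d-1}$ and $\varphi=u$ the segment has the law of the typical (resp.\ length-weighted typical) edge of $\PHT(s_{d-1})$, so its length is exponential with rate $\Lambda(\langle u\rangle)s_{d-1}$ in the $j=0$ case and Gamma$(2,\Lambda(\langle u\rangle)s_{d-1})$ in the $j=1$ case. Integrating the Poisson$(\Lambda(\langle u\rangle)xC)$ probability against these densities in $x$ makes every factor of $\Lambda(\langle u\rangle)$ cancel and produces the geometric-type expression $s_{d-1}C^n/(C+s_{d-1})^{n+1}$ for $j=0$ and the negative-binomial-type $(n+1)s_{d-1}^2C^n/(C+s_{d-1})^{n+2}$ for $j=1$, where $C=d\cdot t-2s_{d-1}-\ldots-s_1$ and $C+s_{d-1}=d\cdot t-s_{d-1}-\ldots-s_1$. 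Only \emph{then} does one integrate against the birth-time densities from Theorem~\ref{thm:GeneralBirthTime}, giving the stated formulas directly. Your description of the vertices present at birth (``$d-2$ endpoint-type internal vertices'') is also off: those too are Poisson-distributed, coming from subdivisions behind $M_1$ during the earlier intervals $(s_1,s_2),\ldots,(s_{d-2},s_{d-1})$, and this is already built into the parameter $C$.
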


In the planar case $d=2$, $\mathsf{p}_{1,0}(n)$ is known from \cite{MNW11,T10}, whereas for $d=3$ the formula for $\mathsf{p}_{1,0}(n)$ has been established in \cite{TWN} by different methods. Our approach in the present paper is more general and allows to deduce the corresponding formula for the length-weighted maximal segment as well as to deal with arbitrary space dimensions. To provide a concrete example, take $d=3$ and consider the length-weighted typical maximal segment. Here, we have
\begin{eqnarray*}
\mathsf{p}_{1,1}(0) &=& 5+18\ln 2-{63\over 4}\ln 3\approx 0.173506\,,\\
\mathsf{p}_{1,1}(1) &=& 28+90\ln 2-{657\over 8}\ln 3\approx 0.159712,\qquad {\rm etc.}
\end{eqnarray*}
The mean number of internal vertices equals $7$ in this case.

\bigskip

The rest of this note is structured as follows. In Section \ref{sec:preliminaries} we recall some important facts about STIT and also about Poisson hyperplane tessellations, which prepare for the proof of our results. These are presented in the two final sections \ref{sec:proofs} and \ref{sec:proofs2}.

\section{Some preliminaries}\label{sec:preliminaries}

\emph{Iteration of tessellations}.
Below we will exploit the fact that the tessellations $Y(t)$ are stable under iterations. To explain what this means, let $0<s<t$ and define the \textbf{iteration} $Y(s)\boxplus Y(t)$ of $Y(s)$ and $Y(t)$ as the tessellation that arises by locally superimposing within the cells of $Y(s)$ independent copies of $Y(t)$. Formally, let $\{Y_c(t):c\in Y(s)\}$ be a family of i.i.d.\ copies of $Y(t)$, which is indexed by the cells of $Y(s)$, and which is also independent of $Y(s)$. Then, $$Y(s)\boxplus Y(t):=\big\{c\cap \tilde{c}:c\in Y(s),\ \tilde{c}\in Y_c(t),\,\textup{int}\,c\cap\textup{int}\,\tilde{c}\neq\emptyset\big\}.$$ The STIT tessellations are stable under iteration in that the distributional equality
\begin{equation}\label{eq:iterationstability}
 Y(s)\boxplus Y(t) \overset{d}{=} Y(s+t)
\end{equation}
holds for any $s,t>0$. In other words, the results are the same in distribution when we either run the above cell-division algorithm from time $s$ to time $s+t$ or perform at time $s$ an iteration of $Y(s)$ and $Y(t)$; cf.~\cite{MNW,NW05}. This will play an important r\^ole in the proofs below.

\bigskip

\emph{STIT scaling}.
We collect here two implications of the scaling property of a STIT tessellation $Y(t)$. Globally, it says that the dilation $tY(t)$ of $Y(t)$ by factor $t$ has the same distribution as $Y(1)$, the STIT tessellation with time parameter $1$, i.e.,
\begin{equation}\label{eq:STITglobalScaling}
 tY(t)\overset{d}{=}Y(1).
\end{equation}
For a polytope $W$ we also have the local scaling $tY_{W}(t)\overset{d}{=}Y_{tW}(1)$; see \cite{NW05} for example.

Let us denote by $\r_{k,j}^{(t)}$ the density of the $j$-th intrinsic volume of $\cMP_k^{(t)}$, that is,
\begin{equation}\label{eq:rkj}
\r_{k,j}^{(t)}:=\lim_{r\rightarrow\infty}{1\over r^d\kappa_d}\bE\sum_{p\in\cMP_k^{(t)}}{\bf 1}\{\,p\subset B_r^d\,\}\, V_j(p),
\end{equation}
where $\kappa_d$ is the volume of $B_1^d$ (one can in fact show that this limit is well-defined, see \cite{STBern}). Using \eqref{eq:STITglobalScaling}, the definitions \eqref{eq:defweightedpolytope} of $\MP_{k,j}^{(t)}$ and \eqref{eq:rkj} of $\r_{k,j}^{(t)}$ as well as the homogeneity of  the intrinsic volumes, one easily shows the following two facts.
\begin{lemma}\label{lem:Scaling}
For $t>0$, $k\in\{0,\ldots,d-1\}$ and $i,j\in\{0,\ldots,k\}$ it holds that
\begin{itemize}
 \item[a)] $\r_{k,j}^{(t)}=t^{d-j}\r_{k,j}^{(1)}$,
 \item[b)] $\bE V_i(\MP_{k,j}^{(t)})=t^{-i}\,\bE V_i(\MP_{k,j}^{(1)})$.
\end{itemize}
\end{lemma}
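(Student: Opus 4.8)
The plan is to obtain both identities as immediate consequences of the global scaling relation \eqref{eq:STITglobalScaling} together with the homogeneity $V_j(\alpha p)=\alpha^j V_j(p)$ of the intrinsic volumes under a dilation by a factor $\alpha>0$. The common device is the following: since \eqref{eq:STITglobalScaling} says $tY(t)\overset{d}{=}Y(1)$, the point process $\{\,tp:p\in\cMP_k^{(t)}\,\}$ of rescaled $k$-dimensional maximal polytopes has the same distribution as $\cMP_k^{(1)}$. Hence in any expectation of a sum over $\cMP_k^{(t)}$ we may substitute $p=q/t$ with $q$ running through $\cMP_k^{(1)}$, using that $p\subset B_r^d\Leftrightarrow q\subset B_{tr}^d$ and that $V_j(q/t)=t^{-j}V_j(q)$.

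For part a) I would start from the definition \eqref{eq:rkj}. Applying the substitution above turns $\bE\sum_{p\in\cMP_k^{(t)}}{\bf 1}\{p\subset B_r^d\}V_j(p)$ into $t^{-j}\,\bE\sum_{q\in\cMP_k^{(1)}}{\bf 1}\{q\subset B_{tr}^d\}V_j(q)$. Dividing by $r^d\kappa_d=(tr)^d\kappa_d/t^d$ and passing to the limit along the relabelled radius $R:=tr\to\infty$ (a cofinal reparametrisation of the net, hence legitimate since the limit is known to exist) gives $\r_{k,j}^{(t)}=t^{d-j}\r_{k,j}^{(1)}$.

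For part b) I would first prove the distributional identity $\MP_{k,j}^{(t)}\overset{d}{=}t^{-1}\MP_{k,j}^{(1)}$, from which the claim follows at once by homogeneity: $\bE V_i(\MP_{k,j}^{(t)})=\bE V_i(t^{-1}\MP_{k,j}^{(1)})=t^{-i}\,\bE V_i(\MP_{k,j}^{(1)})$. To obtain this identity, apply the same substitution in both the numerator and the denominator of \eqref{eq:defweightedpolytope}. The two extra ingredients are the equivariance of the circumcenter, $m(q/t)=m(q)/t$, so that $q/t-m(q/t)=(q-m(q))/t$ and therefore ${\bf 1}\{q/t-m(q/t)\in A\}={\bf 1}\{q-m(q)\in tA\}$, and the cancellation of the factors $t^{-j}$ produced by $V_j$ between numerator and denominator. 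After relabelling the radius as before one reads off $\bP_{k,j}^{(t)}(A)=\bP_{k,j}^{(1)}(tA)$ for every Borel set $A\subset\cP_k^o$, which is exactly $\MP_{k,j}^{(t)}\overset{d}{=}t^{-1}\MP_{k,j}^{(1)}$.

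There is essentially no hard part here; the only points requiring a word of care are the interchange of the limit in $r$ with the substitution — harmless because the relevant limits are already known to exist and $R=tr$ is merely a rescaling of the net — and the measurability of the set $tA$ in $\cP_k^o$, which holds because the dilation $p\mapsto tp$ is a measurable self-map of $\cP_k^o$ (it fixes the circumcenter at $o$). Everything else is bookkeeping with the homogeneity degrees $i$ and $j$.
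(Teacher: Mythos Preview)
Your proposal is correct and follows exactly the approach the paper indicates: it merely states that the lemma follows from the global scaling \eqref{eq:STITglobalScaling}, the definitions \eqref{eq:defweightedpolytope} and \eqref{eq:rkj}, and the homogeneity of the intrinsic volumes, without spelling out the details. Your write-up supplies precisely those details (the substitution $q=tp$, the relabelling $R=tr$, and the circumcenter equivariance $m(q/t)=m(q)/t$), so there is nothing to add or amend.
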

Besides these two scaling relations, also the exact values of $\r_{k,j}^{(t)}$ and $\bE V_i(\MP_{k,j}^{(t)})$ are known from \cite{STBern} or can be determined with the help of Lemma \ref{lem:Vjbirthtime} below, but they are not important for our purposes.

\bigskip

\emph{STIT intersections.} Let $E$ be a $k$-dimensional affine subspace of $\RR^d$. Then $Y(t)\cap E$ is a STIT tessellation within $E$, i.e., the sectional tessellation $Y(t)\cap E$ is also stable under iterations. It has the property that the $(k-1)$-volume density of its cell boundaries is proportional to $t$ with a proportionality constant depending on $d$, $k$ and the hyperplane measure $\L$ (this follows from standard intersection formulae for surface processes \cite[Theorem 4.5.3]{SW}). In particular, the intersection of the STIT tessellation $Y(t)$ with a line $L={\rm span}\,u$, where $u\in{\cal S}_+^{d-1}$ (upper unit half-sphere), is a Poisson point process with intensity $t\L(\lk u\rk)$ (here $u$ has to be interpreted as the line segment connecting the origin with $u$); cf.~\cite{NW05}.

\bigskip

\emph{Poisson hyperplanes}.
Let us denote by $\PHT(t)$ a \textbf{Poisson hyperplane tessellation} in $\RR^d$ with intensity measure $t\L$; cf.~\cite{Calka,SW} for definition. Now, similarly as for the STIT tessellations, for $k\in\{0,\ldots,d-1\}$ and $j\in\{0,\ldots,k\}$ we denote by $\F_{k,j}^{\PHT(t)}$ the \textbf{$V_j$-weighted typical $k$-face} of $\PHT(t)$. Formally, its distribution is defined by replacing the class $\cMP_k^{(t)}$ of $k$-dimensional maximal polytopes of $Y(t)$ in \eqref{eq:defweightedpolytope} by the class of $k$-faces of the Poisson hyperplane tessellation $\PHT(t)$.

We make use of the scaling property of Poisson hyperplane tessellation, which is similar to the scaling property (\ref{eq:STITglobalScaling}) of a STIT tessellation. Formally, it says that the dilated Poisson hyperplane tessellation $t\PHT(t)$ has the same distribution as $\PHT(1)$, i.e.,
\begin{equation}\label{eq:PHTScaling}
 t\PHT(t)\overset{d}{=}\PHT(1).
\end{equation}
This follows directly from the uniqueness theorem for Poisson processes \cite[Theorem 3.2.1]{SW} and the form of the intensity measure $\L$; recall \eqref{eq:decomplambda}.

\bigskip

\emph{A distributional identity}.
In our arguments below, we need an identity describing the distribution of $\MP_{k,j}^{(t)}$ in terms of weighted faces in Poisson hyperplane tessellations. In \cite[Theorem 3]{STBern} this fundamental connection between the STIT and the Poisson hyperplane tessellations has been established for $j=0$ by martingale techniques and the theory of piecewise deterministic Markov processes. For our purposes we need a slight generalization of this identity for arbitrary $j\in\{0,\ldots,k\}$, the proof of which resembles the argument of Lemma 4 in \cite{TWN} -- designed for $d=3$, $k=1$ and $j=1$.

\begin{lemma}\label{lem:Vjbirthtime}
Let $d\geq 1$, $k\in\{0,\ldots,d-1\}$, $j\in\{0,\ldots,k\}$ and $t>0$. It holds that
$$\bE f(\MP_{k,j}^{(t)})=\int_0^t{(d-j)s^{d-j-1}\over t^{d-j}}\,\bE f(\F_{k,j}^{\PHT(s)})\,\dint s$$
for any non-negative measurable function $f:\cP_k^o\rightarrow\RR$.
\end{lemma}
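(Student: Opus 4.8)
The plan is to reduce the statement for general $j$ to the known case $j=0$ (Theorem 3 in \cite{STBern}) by conditioning on the shape of the polytope and absorbing the intrinsic-volume weight into a shift of the time parameter. First I would record the scaling identities that both sides must respect. On the STIT side, Lemma \ref{lem:Scaling}(b) together with \eqref{eq:STITglobalScaling} gives $\bE f(\MP_{k,j}^{(t)}) = \bE g_t(t\cdot\MP_{k,j}^{(1)})$ where $g_t(p):=f(p/t)$; on the Poisson side, \eqref{eq:PHTScaling} gives the analogous $\bE f(\F_{k,j}^{\PHT(s)})=\bE g_s(s\cdot\F_{k,j}^{\PHT(1)})$. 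So it suffices to prove the identity for one value of $t$, say $t=1$, and then both sides transform consistently; alternatively one simply keeps $t$ general and checks the scaling is compatible at the end. The cleaner route, and the one I would actually write, is to go back to the definitions \eqref{eq:defweightedpolytope} and \eqref{eq:rkj}: the measure $\bP_{k,j}^{(t)}$ is, up to the normalising constant $\r_{k,j}^{(t)}$, the expected weighted sum $\bE\sum_{p\in\cMP_k^{(t)}}\mathbf 1\{p\subset B_r^d\}\,\mathbf 1\{p-m(p)\in\cdot\}\,V_j(p)$ in the limit $r\to\infty$. Thus it is enough to prove the un-normalised identity
\begin{equation*}
\lim_{r\to\infty}\frac{1}{r^d\kappa_d}\bE\!\!\sum_{p\in\cMP_k^{(t)}}\!\!\mathbf 1\{p\subset B_r^d\}\,f(p-m(p))\,V_j(p)
=\int_0^t\frac{(d-j)s^{d-j-1}}{t^{d-j}}\,\r_{k,j}^{(s)}\,\bE f(\F_{k,j}^{\PHT(s)})\,\dint s,
\end{equation*}
because dividing by $\r_{k,j}^{(t)}=t^{d-j}\r_{k,j}^{(1)}$ and using $\r_{k,j}^{(s)}=s^{d-j}\r_{k,j}^{(1)}$ (Lemma \ref{lem:Scaling}(a)) collapses the right-hand side to $\int_0^t (d-j)s^{d-1}t^{-(d-j)}\r_{k,j}^{(1)}\,\bE f(\F_{k,j}^{\PHT(s)})\,\dint s$, and the densities cancel after the normalisation.

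The engine behind the identity is the spatio-temporal description of $\cMP_k^{(t)}$. A $k$-dimensional maximal polytope $p$ is the intersection of $d-k$ hyperplane pieces, the last of which, carrying the birth-time $\beta_{d-k}$, is what actually ``cuts out'' $p$ at that instant; before $\beta_{d-k}$ the ambient $(k+1)$-flat is still subdivided only by the earlier $d-k-1$ hyperplanes, and the relevant local picture inside that flat is, by the STIT intersection property (a sectional STIT tessellation is again STIT, stated in the \emph{STIT intersections} paragraph) and the iteration stability \eqref{eq:iterationstability}, distributed like a lower-dimensional STIT tessellation. This is exactly the mechanism that produces the known $j=0$ identity in \cite{STBern}, where $\MP_{k,0}^{(t)}$ is obtained by running a Poisson hyperplane tessellation up to a random time $s$ with density $(d)s^{d-1}/t^{d}$ on $(0,t)$ — wait, with the correct exponent $(d-0)s^{d-0-1}/t^{d-0}$. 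Concretely I would follow the argument of \cite[Lemma 4]{TWN}: disintegrate the expected weighted sum over the birth-time of the ``last'' hyperplane, use the Markov property in $t$ and the mean-value / Slivnyak-type formula for the Poisson cutting process to identify the conditional shape given that birth-time as a $V_j$-weighted typical $k$-face of $\PHT(s)$, and read off the time density.

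The one genuinely new point over the $j=0$ case — and the step I expect to be the main obstacle — is handling the weight $V_j(p)$ with $j\ge 1$ rather than just counting polytopes. The shift from $s^{d-1}/t^{d}$ (the $j=0$ density) to $(d-j)s^{d-j-1}/t^{d-j}$ comes precisely from re-weighting by an intrinsic volume: in the martingale/PDMP computation underlying \cite[Theorem 3]{STBern}, tracking $\bE\sum_p V_j(p)$ instead of $\bE\sum_p 1$ multiplies the relevant ``growth rate'' by the homogeneity degree correction, and one must check that the weight $V_j$ is compatible with the conditioning — i.e.\ that conditioning on the birth-times and then size-biasing by $V_j$ of the final polytope is the same as building a $V_j$-weighted typical $k$-face of $\PHT(s)$ directly. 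This is where the argument of \cite[Lemma 4]{TWN}, written there only for $d=3$, $k=1$, $j=1$, has to be generalised; the generalisation is routine in structure but requires care in bookkeeping the intrinsic-volume weight through the iteration \eqref{eq:iterationstability} and the scaling Lemma \ref{lem:Scaling}. Once the time density $(d-j)s^{d-j-1}/t^{d-j}$ is pinned down and the $\r_{k,j}$-cancellation above is performed, the claimed formula follows, and its independence of $\L$ is inherited from the fact that $\L$ enters only through the common law of $\F_{k,j}^{\PHT(s)}$, not through the time density.
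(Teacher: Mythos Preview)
Your overall strategy --- reduce to the $j=0$ case of \cite[Theorem~3]{STBern} and carry the $V_j$-weight through by adapting the argument of \cite[Lemma~4]{TWN} --- is exactly what the paper does (indeed, the paper does not write out a proof at all and simply points to that adaptation). So at the level of approach there is nothing to compare.

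There is, however, a genuine error in the ``cleaner route'' you display. Your proposed un-normalised identity
\[
\lim_{r\to\infty}\frac{1}{r^d\kappa_d}\,\bE\!\!\sum_{p\in\cMP_k^{(t)}}\!\!{\bf 1}\{p\subset B_r^d\}\,f(p-m(p))\,V_j(p)
=\int_0^t\frac{(d-j)s^{d-j-1}}{t^{d-j}}\,\r_{k,j}^{(s)}\,\bE f(\F_{k,j}^{\PHT(s)})\,\dint s
\]
is false already for $f\equiv 1$: the left side is $\r_{k,j}^{(t)}$, while the right side, after inserting $\r_{k,j}^{(s)}=s^{d-j}\r_{k,j}^{(1)}$, evaluates to $\tfrac12\r_{k,j}^{(t)}$. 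The mistake is that you have inserted an extra factor $\r_{k,j}^{(s)}$ on the right; the correct un-normalised version has a time-independent constant there, namely
\[
\r_{k,j}^{(t)}\,\bE f(\MP_{k,j}^{(t)})
=(d-j)\,\r_{k,j}^{(1)}\int_0^t s^{d-j-1}\,\bE f(\F_{k,j}^{\PHT(s)})\,\dint s,
\]
and then division by $\r_{k,j}^{(t)}=t^{d-j}\r_{k,j}^{(1)}$ gives the lemma directly. Your subsequent ``collapse'' sentence inherits the error (you write $s^{d-1}$, but your own integrand produces $s^{2(d-j)-1}$), so the cancellation you announce does not occur. This is a bookkeeping slip rather than a conceptual one: once you drop the spurious $\r_{k,j}^{(s)}$, the rest of your plan --- disintegrate over the last birth-time, use the Markov property, and identify the conditional shape with $\F_{k,j}^{\PHT(s)}$ via the argument of \cite[Lemma~4]{TWN} --- goes through and matches the paper. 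One small further remark: your final sentence about ``independence of $\L$'' does not belong here; the identity of the lemma does depend on $\L$ through the law of $\F_{k,j}^{\PHT(s)}$, and it is only the birth-time \emph{density} in Theorem~\ref{thm:GeneralBirthTime} that is $\L$-free.
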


It is worth pointing out that the density $s\mapsto (d-j)s^{d-j-1}t^{-(d-j)}$ is the marginal density of the last birth time $\beta_{d-k}$ of the $V_j$-weighted typical maximal polytope of dimension $k$ in Theorem \ref{thm:GeneralBirthTime}.

\section{Proof of Theorem \ref{thm:GeneralBirthTime}}\label{sec:proofs}

Throughout this section we let $\L$ be a fixed hyperplane measure with representation \eqref{eq:decomplambda} and $Y(t)$ be a STIT tessellation constructed with the hyperplane measure $\L$ until time $t>0$. The plan of the proof of Theorem \ref{thm:GeneralBirthTime} is first to show the formula for $j=k$ and then to establish the formula in full generality from the special case.

\subsection{The case $j=k$}

\begin{lemma}\label{lem:BirthTimeDensityVk}
Let $d\geq 1$ and $k\in\{0,\ldots,d-1\}$. The joint distribution of the birth-times $\beta_1,\ldots,\beta_{d-k}$ of the volume-weighted typical $k$-dimensional maximal polytope $\MP_{k,k}^{(t)}$ of $Y(t)$ is the uniform distribution on $\{0<s_1<\ldots<s_{d-k}<t\}$, which has density $$(s_1,\ldots,s_{d-k})\mapsto{(d-k)!\over t^{d-k}}\,{\bf 1}\{0<s_1<\ldots<s_{d-k}<t\}.$$
\end{lemma}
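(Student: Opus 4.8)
The plan is to prove Lemma~\ref{lem:BirthTimeDensityVk} by combining the iteration-stability property~\eqref{eq:iterationstability} with a recursion on the space dimension, exploiting the fact that intersecting a STIT tessellation with a $(d-1)$-dimensional affine subspace again yields a STIT tessellation. First I would set up the combinatorial picture: a $k$-dimensional maximal polytope is the intersection of $d-k$ maximal hyperplane pieces $M_1,\ldots,M_{d-k}$, each with its own birth-time, and one must decide, conditionally on realising a $V_k$-weighted typical $k$-polytope, how these birth-times are distributed. The key structural observation is that the first-born of these $d-k$ pieces, say $M_{(1)}$ with birth-time $\beta_1$, is created as a splitting hyperplane piece in some cell of the tessellation as it stood just before time $\beta_1$, and conditionally on $\beta_1=s_1$ the remaining $d-k-1$ pieces together with all subsequent structure inside that hyperplane piece $M_{(1)}$ evolve, by the Markov property and~\eqref{eq:iterationstability}, like a $(d-1)$-dimensional STIT tessellation restricted to $M_{(1)}$ and run from time $s_1$ onwards. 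Volume-weighting is the crucial point that makes this clean: the $V_k$-weight of the final $k$-polytope is the $k$-volume of the intersection, which can be resolved dimension by dimension, and at each stage the volume-weighting of the lower-dimensional object matches exactly the volume-weighting needed to apply the inductive hypothesis.

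Concretely, I would argue as follows. For $d-k=1$ (so $k=d-1$), the claim is that the single birth-time $\beta_1$ of the volume-weighted typical $(d-1)$-dimensional maximal polytope is uniform on $(0,t)$, i.e.\ has density $1/t$; this is the base case and follows from Lemma~\ref{lem:Vjbirthtime} with $j=k=d-1$, whose density $s\mapsto (d-j)s^{d-j-1}t^{-(d-j)}$ becomes precisely $1/t$, or alternatively directly from the construction: new hyperplane pieces are born at a rate proportional to the relevant surface density, which for the volume-weighted piece is constant in time. For the inductive step, fix the first birth-time $\beta_1=s_1$; conditionally on this, restrict attention to the hyperplane piece $M_{(1)}$ born at time $s_1$. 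Inside $M_{(1)}$ the tessellation structure is that of a $(d-1)$-dimensional STIT tessellation started at time $s_1$ and run up to time $t$, and a volume-weighted typical $k$-polytope of the ambient tessellation that lies in $M_{(1)}$ corresponds to a volume-weighted typical $k$-polytope of this sectional $(d-1)$-dimensional STIT tessellation — here $k\le d-2 = (d-1)-1$, so the inductive hypothesis applies in dimension $d-1$ with window-time running over the interval $(s_1,t)$. By induction the remaining birth-times $\beta_2,\ldots,\beta_{d-k}$ are then uniform on $\{s_1<s_2<\ldots<s_{d-k}<t\}$, i.e.\ have conditional density $(d-k-1)!\,(t-s_1)^{-(d-k-1)}$. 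Finally I would integrate against the marginal density of $\beta_1$: this marginal density is not $1/t$ but is weighted by the factor $(t-s_1)^{d-k-1}$ reflecting the ``room'' left for the subsequent $d-k-1$ births together with the appropriate normalisation of the volume weight, and multiplying $(d-k-1)!\,(t-s_1)^{-(d-k-1)}$ by this marginal and simplifying should collapse to the constant $(d-k)!\,t^{-(d-k)}$, giving the uniform law on the simplex $\{0<s_1<\ldots<s_{d-k}<t\}$.

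The main obstacle, and the step that needs the most care, is making the conditional-independence-and-rescaling argument rigorous: one must justify that, conditionally on $\beta_1=s_1$, the pair consisting of ``which hyperplane piece $M_{(1)}$ was born'' and ``the STIT structure inside $M_{(1)}$ from time $s_1$ onward, volume-weighted'' decouples in exactly the way needed, and that the sectional tessellation $Y(t)\cap M_{(1)}$ inherits both the STIT property and the correct intensity so that the inductive hypothesis genuinely applies with time interval $(s_1,t)$ rather than some rescaled interval. This is where iteration-stability~\eqref{eq:iterationstability} does the real work: writing $Y(t)$ (locally) as $Y(s_1)\boxplus Y(t-s_1)$ realises the hyperplane piece born at $s_1$ as one of the initial $Y(s_1)$-cell boundaries, and then all further subdivision of that piece is governed by an independent copy of $Y(t-s_1)$, whose restriction to the $(d-1)$-flat containing $M_{(1)}$ is a $(d-1)$-dimensional STIT tessellation with time parameter $t-s_1$; the bookkeeping of the volume weight through this decomposition, and the verification that the $V_k$-weight on the $k$-polytope factors correctly into a $V_{k-1}$-type weight inside the section plus the one-dimensional transversal contribution, is the technical heart of the proof. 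I expect the rest — the base case and the final integration — to be routine.
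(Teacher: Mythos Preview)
Your inductive approach via sectioning is genuinely different from the paper's argument, and the difference is instructive. The paper does \emph{not} induct on $d-k$: instead it fixes $0<s_1<\ldots<s_{d-k}<t$ and computes the single probability
\[
\bP\big(\beta_1\in(0,s_1),\ldots,\beta_{d-k}\in(s_{d-k-1},s_{d-k})\big)
\]
directly. Using iteration stability in one stroke as $Y(t)\overset{d}{=}Y(s_1)\boxplus Y(s_2-s_1)\boxplus\ldots\boxplus Y(t-s_{d-k})$, the $k$-volume density of $k$-maximal polytopes with birth-times in the prescribed intervals is obtained from the intersection formula for surface processes \cite[Theorem 4.5.3]{SW} as a constant times $\prod_{i=1}^{d-k}(s_i-s_{i-1})$; dividing by $\r_{k,k}^{(t)}=t^{d-k}\r_{k,k}^{(1)}$ and differentiating gives a constant density, and normalisation fixes the constant as $(d-k)!$. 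No conditioning, no recursion, no sectional tessellations.

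Your plan is plausible in spirit but has two soft spots that you flag yourself and that, as written, are not closed. First, the conditional identification: given $\beta_1=s_1$, the later pieces $M_{(2)},\ldots,M_{(d-k)}$ are born in the ambient $d$-dimensional process on \emph{one} side of $M_{(1)}$ (the $k$-polytope is a $k$-face of the last-born piece $M_{(d-k)}$), so the induced structure on $H_1$ that you need is not the two-sided sectional tessellation $Y(t)\cap H_1$, and it is not immediate that the trace on $H_1$ of the one-sided evolution is a $(d-1)$-dimensional STIT with time parameter exactly $t-s_1$ \emph{and} that your $k$-polytope becomes a $V_k$-weighted typical $k$-maximal polytope of it with the correct birth-time correspondence. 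This can probably be made precise, but it is real work, not bookkeeping. Second, and relatedly, you need the marginal density of $\beta_1$ \emph{before} you can integrate, yet you only assert its form; Lemma~\ref{lem:Vjbirthtime} gives you the marginal of the \emph{last} birth-time $\beta_{d-k}$, not the first, so an induction conditioned on $\beta_{d-k}$ would mesh better with the available tools --- but then the inductive picture changes (you would be looking at the cell in which $M_{(d-k)}$ is born and at its boundary structure, not at a section through $M_{(1)}$). In short, the paper's one-shot decomposition sidesteps exactly the conditional-structure questions that make your recursion delicate.
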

\begin{proof}
Recall that $\MP_{k,j}^{(t)}$ is the intersection of $d-k$ maximal polytopes of dimension $d-1$, lying on hyperplanes $H_1,\ldots,H_{d-k}$ and having birth-times $\beta_1,\ldots,\beta_{d-k}$. We are now going to calculate the probability
\begin{equation}\label{eq:proofBirthTimes1}
 \bP\big(\beta_1\in(0,s_1),\ldots,\beta_{{d-k}}\in(s_{d-k-1},s_{d-k})\big),
\end{equation}
where $s_1<\ldots<s_{d-k}$ are fixed. In other words, we want to calculate the probability that hyperplane $H_1$ is born during the time interval $(0,s_1)$, hyperplane $H_2$ is born during the time interval $(s_1,s_2)$ etc.\ until hyperplane $H_{d-k}$ and the time interval $(s_{d-k-1},s_{d-k})$.

To evaluate this probability, we formally mark every member of $\cMP_k^{(t)}$ with its associated $d-k$ birth-times $\beta_1,\ldots,\beta_{d-k}$. This yields a marked point process on the product space $\cP_{k}\times(0,\infty)^{d-k}$, where $\cP_{k}$ is the space of $k$-dimensional polytopes in $\RR^d$. In this context, the probability in \eqref{eq:proofBirthTimes1} is just the mark distribution of this point process evaluated at $(0,s_1)\times\ldots\times(s_{d-k-1},s_{d-k})$. According to the general theory of marked point processes (cf.~\cite[Chapter 3]{SW}), this equals $\r_{k,k}^{(s_1,\ldots,s_{d-k},t)}/\r_{k,k}^{(t)}$, where $\r_{k,k}^{(s_1,\ldots,s_{d-k},t)}$ is the $k$-volume density of those $k$-dimensional maximal polytopes of $Y(t)$ whose birth-times satisfy the constraints $\beta_1\in(0,s_1),\ldots,\beta_{{d-k}}\in(s_{d-k-1},s_{d-k})$. Since $\r_{k,k}^{(t)}=t^{d-k}\r_{k,k}^{(1)}$ according to Lemma \ref{lem:Scaling} a), it remains to determine $\r_{k,k}^{(s_1,\ldots,s_{d-k},t)}$. This is done recursively using that
\begin{equation*}
Y(t)\overset{d}{=}Y(s_1)\boxplus Y(s_2-s_1)\boxplus\ldots\boxplus Y(s_{d-k}-s_{d-k-1})\boxplus Y(t-s_{d-k}),
\end{equation*}
which is valid by iteration stability of $Y(t)$; recall \eqref{eq:iterationstability}. Thus, such a $k$-dimensional maximal polytope must be contained in the intersection of $d-k$ processes of $(d-1)$-dimensional polytopes having $(d-1)$-volume densities $s_1,s_2-s_1,\ldots,s_{d-k}-s_{d-k-1}$, respectively. Thus, by iterated application of intersection formulae for such processes (see \cite[Theorem 4.5.3]{SW}) we find that $\r_{k,k}^{(s_1,\ldots,s_{d-k},t)}$ is proportional to the product $s_1(s_2-s_1)(s_3-s_2)\ldots(s_{d-k}-s_{d-k-1})$ with some proportionality constant $c_1$ only depending on the dimension parameters $k$ and $d$ as well as on the hyperplane measure $\L$. Putting now $c_2:=c_1/\r_{k,k}^{(1)}$, we obtain
$$\bP\big(\beta_1\in(0,s_1),\ldots,\beta_{{d-k}}\in(s_{d-k-1},s_{d-k})\big)={\r_{k,k}^{(s_1,\ldots,s_{d-k},t)}\over\r_{k,k}^{(t)}}=c_2\,{\prod\limits_{j=1}^{d-k}(s_j-s_{j-1})\over t^{d-k}}$$ with the convention that $s_0=0$. Differentiation implies that the joint density of the birth-times is $c_2\,t^{-(d-k)}$. Since this integrates to $1$, we must have $c_2=(d-k)!$, which completes the proof.
\end{proof}

To give a proof for general $j\in\{0,\ldots,k\}$ we need the following fact.

\begin{lemma}\label{lem:independenceKK}
Let $d\geq 1$, $k\in\{0,\ldots,d-1\}$, $f:\cP_k^o\rightarrow\RR$ be non-negative and measurable, $t>0$ and $0<s_1<\ldots<s_{d-k}<t$. Then
\begin{equation}\label{eq:indKK}
\bE\big[f(\MP_{k,k}^{(t)})|\beta_1=s_1,\ldots,\beta_{d-k}=s_{d-k}\big]=\bE\big[f(\F_{k,k}^{\PHT(s_{d-k})})\big].
\end{equation}
\end{lemma}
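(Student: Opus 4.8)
\textbf{Proof plan for Lemma \ref{lem:independenceKK}.}

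The plan is to combine the iteration-stability representation of $Y(t)$ used in the proof of Lemma \ref{lem:BirthTimeDensityVk} with the fact that, conditionally on the birth-times $\beta_1=s_1,\ldots,\beta_{d-k}=s_{d-k}$, the polytope $\MP_{k,k}^{(t)}$ is realised as the intersection of $d-k$ independent $(d-1)$-dimensional maximal polytopes, the last of which (carrying birth-time $s_{d-k}$) lies in an independent copy of a STIT tessellation of parameter $t-s_{d-k}$ superimposed inside the cells created by the earlier cuts. First I would set up the marked point process from the proof of Lemma \ref{lem:BirthTimeDensityVk}: the $k$-volume-weighted typical maximal polytope with prescribed birth-times has distribution obtained by Palm/mark theory from the measure $\rho_{k,k}^{(s_1,\ldots,s_{d-k},t)}$, and I would now keep track not only of the volume density but of the \emph{shape} $p-m(p)$ of the resulting $k$-polytope. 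The key observation is that the shape of the intersection is governed by the $(d-1)$-dimensional maximal polytopes only through their \emph{directions} (the hyperplanes $H_1,\ldots,H_{d-k}$), together with the local structure near the intersection; since $\MP_{k,k}^{(t)}$ is the face of the last-born $(d-1)$-polytope $\cMP_{d-1}$ of birth-time $s_{d-k}$ cut out by the earlier ones, and a $(d-1)$-maximal polytope born at time $s_{d-k}$ inside a cell $c$ is distributed (when shape is considered) exactly as a cell-splitting hyperplane piece in a fresh $Y(s_{d-k})$, the distribution of the shape only depends on $s_{d-k}$.

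The second and main step is to identify this conditional shape distribution with that of the $V_k$-weighted typical $k$-face of $\PHT(s_{d-k})$. Here I would invoke the special case $j=0$ of the STIT--Poisson identity from \cite[Theorem 3]{STBern} (or rather the structural fact underlying Lemma \ref{lem:Vjbirthtime}): a $(d-1)$-maximal polytope of $Y(t)$ born at time $s$, \emph{together with} the tessellation structure it inherits on itself from the older cuts, has the same distribution as a $(d-1)$-face of $\PHT(s)$ with the induced $(d-2)$-dimensional Poisson hyperplane tessellation on it. Iterating this down through the $d-k$ cuts — each older cut of birth-time $s_i<s_{d-k}$ contributes, on the affine hull of the eventual $k$-polytope, a hyperplane whose conditional direction distribution is again $\bQ$-governed and, crucially, once we pass to the $V_k$-weighting the cut positions are the ones producing a \emph{typical} rather than biased intersection point — one obtains that the section of $\PHT(s_{d-k})$ by the common $k$-flat produces precisely the $V_k$-weighted typical $k$-face. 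The volume weight $V_k$ is exactly what makes the choice of the $k$-flat among all candidate intersection flats unbiased with respect to $(k$-dimensional) Lebesgue content, matching the Palm/mark construction of $\F_{k,k}^{\PHT(s_{d-k})}$; this is the heart of the argument and the place where the hypothesis $j=k$ is genuinely used.

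The hard part will be making the "together with the induced lower-dimensional structure" bookkeeping rigorous, i.e. showing that conditioning on all $d-k$ birth-times leaves the shape of $\MP_{k,k}^{(t)}$ distributed as if the $d-k$ separating hyperplanes were simply $d-k$ independent hyperplanes distributed according to $\bQ$ (so that their common intersection flat and the section of the configuration look Poissonian at parameter $s_{d-k}$). I would handle this by the same recursive use of iteration stability as in Lemma \ref{lem:BirthTimeDensityVk} — writing $Y(t)\overset{d}{=}Y(s_1)\boxplus\cdots\boxplus Y(s_{d-k}-s_{d-k-1})\boxplus Y(t-s_{d-k})$, noting that the last factor $Y(t-s_{d-k})$ is irrelevant for a polytope born before $t-s_{d-k}$ must have finished splitting, and observing that at each level the $V_k$-weighting turns the conditional law of the relevant cut into a stationary (hence $\bQ$-directional, Lebesgue-positioned) hyperplane, exactly as in a Poisson hyperplane tessellation of the corresponding parameter. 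Assembling these, the section by the resulting $k$-flat is a $\PHT(s_{d-k})$-section, and its $V_k$-weighted typical cell is $\F_{k,k}^{\PHT(s_{d-k})}$, which yields \eqref{eq:indKK}.
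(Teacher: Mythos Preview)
Your plan is vastly more complicated than necessary, and the complications stem from not using the two tools the paper has already set up for exactly this purpose.

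The paper's proof is essentially two lines. First, the Markov property of the process $(Y(t))_{t>0}$ in the time parameter implies
\[
\bE\big[f(\MP_{k,k}^{(t)})\,\big|\,\beta_1=s_1,\ldots,\beta_{d-k}=s_{d-k}\big]
=\bE\big[f(\MP_{k,k}^{(t)})\,\big|\,\beta_{d-k}=s_{d-k}\big],
\]
because the shape of the $k$-polytope is fixed at time $\beta_{d-k}$ and, given the state of the tessellation at that instant, the earlier birth-times carry no additional information. Second, Lemma~\ref{lem:Vjbirthtime} with $j=k$ reads
\[
\bE f(\MP_{k,k}^{(t)})=\int_0^t \frac{(d-k)s^{d-k-1}}{t^{d-k}}\,\bE f\big(\F_{k,k}^{\PHT(s)}\big)\,\dint s,
\]
and by Lemma~\ref{lem:BirthTimeDensityVk} the weight $(d-k)s^{d-k-1}/t^{d-k}$ is precisely the marginal density of $\beta_{d-k}$. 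Hence this integral \emph{is} the disintegration of the law of $\MP_{k,k}^{(t)}$ over $\beta_{d-k}$, so the conditional law given $\beta_{d-k}=s_{d-k}$ is that of $\F_{k,k}^{\PHT(s_{d-k})}$. Done.

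Your recursive programme---peeling off the $d-k$ cuts one by one, tracking the induced lower-dimensional structure, and arguing that the $V_k$-weight unbiases the section---is an attempt to reprove Lemma~\ref{lem:Vjbirthtime} from scratch in this special case, while simultaneously handling the conditioning on \emph{all} birth-times. The ``hard part'' you single out (making the inherited-structure bookkeeping rigorous) is exactly the content that Lemma~\ref{lem:Vjbirthtime} packages for you; there is no need to redo it here. Moreover, some of your intermediate claims are imprecise: a $(d-1)$-maximal polytope born at time $s_{d-k}$ is not ``a cell-splitting hyperplane piece in a fresh $Y(s_{d-k})$'' but rather has the law of a typical cell of $\PHT(s_{d-k})$ restricted to a hyperplane---this is the STIT--Poisson connection, and it is already encoded in Lemma~\ref{lem:Vjbirthtime}. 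Use the Markov property to drop $s_1,\ldots,s_{d-k-1}$, then invoke Lemma~\ref{lem:Vjbirthtime} directly for $\MP_{k,k}^{(t)}$.
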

\begin{proof}
Since $(Y(t))_{t>0}$ -- regarded as a random process taking values in the measurable space of tessellations of $\RR^d$ -- has the Markov property in the continuous time parameter $t$, it holds that
$$\bE\big[f(\MP_{k,k}^{(t)})|\beta_1=s_1,\ldots,\beta_{d-k}=s_{d-k}\big]= \bE\big[f(\MP_{k,k}^{(t)})|\beta_{d-k}=s_{d-k}\big].$$ Next, Lemma \ref{lem:Vjbirthtime} tells us that the conditional distribution of $\MP_{k,k}^{(t)}$, given its last birth time $\beta_{d-k}=s_{d-k}$, is the same as the distribution of $\F_{k,k}^{\PHT(s_{d-k})}$. Formally, this is exactly \eqref{eq:indKK}, which completes the argument.
\end{proof}

\subsection{Proof for general $j$}

The relationship between $\MP_{k,j}^{(t)}$ and $\MP_{k,0}^{(t)}$ can be described by
\begin{equation}\label{eq:ProofGeneralBirthTime1}
\bE f(\MP_{k,j}^{(t)}) = \big[\bE V_j(\MP_{k,0}^{(t)})\big]^{-1}\,\bE\big[f(\MP_{k,0}^{(t)})\,V_j(\MP_{k,0}^{(t)})],
\end{equation}
where $f$ is a non-negative measurable function on $\cP_k^o$. This is a direct consequence of Neveu's exchange formula \cite[Theorem 3.4.5]{SW} and can be shown similarly to Equations (8) and (10) in \cite{Schneider09}. In particular for $j=k$ we have that
\begin{equation}\label{eq:ProofGeneralBirthTime1xx}
\bE f(\MP_{k,k}^{(t)}) = \big[\bE V_k(\MP_{k,0}^{(t)})\big]^{-1}\,\bE\big[f(\MP_{k,0}^{(t)})\,V_k(\MP_{k,0}^{(t)})];
\end{equation}
see also \cite{Calka}. Using \eqref{eq:ProofGeneralBirthTime1xx} with $f(\cdot)\,V_j(\cdot)\,V_k(\cdot)^{-1}$ instead of $f$ there, we find that
\begin{equation}\label{eq:ProofGeneralBirthTime2}
\bE\big[f(\MP_{k,k}^{(t)})\,V_j\,(\MP_{k,k}^{(t)})V_k(\MP_{k,k}^{(t)})^{-1}\big] = \big[\bE V_k(\MP_{k,0}^{(t)})\big]^{-1}\,\bE\big[ f(\MP_{k,0}^{(t)})\,V_j(\MP_{k,0}^{(t)})\big].
\end{equation}
Combining now \eqref{eq:ProofGeneralBirthTime1} with \eqref{eq:ProofGeneralBirthTime2}, we see that $\MP_{k,j}^{(t)}$ and $\MP_{k,k}^{(t)}$ are related by
\begin{equation}\label{eq:ProofGeneralBirthTime3}
\bE f(\MP_{k,j}^{(t)}) = {\bE V_k(\MP_{k,0}^{(t)})\over \bE V_j(\MP_{k,0}^{(t)})}\,\bE\big[f(\MP_{k,k}^{(t)})\,V_j(\MP_{k,k}^{(t)})\,V_k(\MP_{k,k}^{(t)})^{-1}\big].
\end{equation}
Let us fix $0<r_1<\ldots<r_{d-k}<t$ and apply \eqref{eq:ProofGeneralBirthTime3} with $$f=\textbf{1} \big\{\beta_1\in(0,r_1),\ldots,\beta_{d-k}\in(r_{d-k-1},r_{d-k})\big\}$$ to obtain
\begin{equation}\label{eq:ProofGeneralBirthTime4}
\begin{split}
&\bP\big(\beta_1(\MP_{k,j}^{(t)})\in(0,r_1),\ldots,\beta_{d-k}(\MP_{k,j}^{(t)})\in(r_{d-k-1},r_{d-k})\big)\\
&={\bE V_k(\MP_{k,0}^{(t)})\over \bE V_j(\MP_{k,0}^{(t)})}\,\bE\big[{\bf 1}\{\beta_1(\MP_{k,k}^{(t)})\in(0,r_1),\ldots\\
&\hspace{1cm}\ldots,\beta_{d-k}(\MP_{k,k}^{(t)})\in(r_{d-k-1},r_{d-k})\}\,V_j(\MP_{k,k}^{(t)})\,V_k(\MP_{k,k}^{(t)})^{-1}\big].
\end{split}
\end{equation}
Conditioning on the birth-times and using Lemma \ref{lem:BirthTimeDensityVk} yield $$\bE\big[{\bf 1}\{\beta_1(\MP_{k,k}^{(t)})\in(0,r_1),\ldots,\beta_{d-k}(\MP_{k,k}^{(t)})\in(r_{d-k-1},r_{d-k})\}\,V_j(\MP_{k,k}^{(t)})\,V_k(\MP_{k,k}^{(t)})^{-1}\big]$$ $$=\int_0^{r_1}\int_{r_1}^{r_2}\ldots\int_{r_{d-k-1}}^{r_{d-k}}\bE\big[V_j(\MP_{k,k}^{(t)})\,V_k(\MP_{k,k}^{(t)})^{-1}|\beta_1=s_1,\ldots,\beta_{d-k}=s_{d-k}\big]$$ $$\hspace{5cm}\times\,{(d-k)!\over t^{d-k}}\, \dint s_{d-k}\ldots\dint s_2\dint s_1.$$
Lemma \ref{lem:independenceKK} implies that the joint conditional distribution of $(V_k,V_j)$ of the $V_k$-weighted typical $k$-dimensional maximal polytope $\MP_{k,k}^{(t)}$ of $Y(t)$, given its birth-times $(\beta_1,\ldots,\beta_{d-k})=(s_1,\ldots,s_{d-k})$, only depends on the last birth-time $\beta_{d-k}=s_{d-k}$ and equals the joint distribution of $(V_k,V_j)$ of the $V_k$-weighted typical $k$-face $\F_{k,k}^{\PHT(s_{d-k})}$ in a Poisson hyperplane tessellation with intensity measure $s_{d-k}\L$. Whence, due to the scaling property \eqref{eq:PHTScaling} and the homogeneity of the intrinsic volume $V_j$ we infer that
\begin{equation*}
\begin{split}
&\bE\big[V_j(\MP_{k,k}^{(t)})\,V_k(\MP_{k,k}^{(t)})^{-1}|\beta_1=s_1,\ldots,\beta_{d-k}=s_{d-k}\big]\\
&= \bE\big[V_j(\F_{k,k}^{\PHT(s_{d-k})})\,V_k(\F_{k,k}^{\PHT(s_{d-k})})^{-1}\big]\\
&= \bE\big[V_j(s_{d-k}^{-1}\F_{k,k}^{\PHT(1)})\,V_k(s_{d-k}^{-1}\F_{k,k}^{\PHT(1)})^{-1}\big]\\
&= s_{d-k}^{k-j}\;\bE\big[V_j(\F_{k,k}^{\PHT(1)})\,V_k(\F_{k,k}^{\PHT(1)})^{-1}\big]\\
&= c_3s_{d-k}^{k-j},
\end{split}
\end{equation*}
where $c_3:=\bE\big[V_j(\F_{k,k}^{\PHT(1)})\,V_k(\F_{k,k}^{\PHT(1)})^{-1}\big]$ is a constant only depending on $j,k,d$ and $\L$. So,
$$\bE\big[{\bf 1}\{\beta_1(\MP_{k,k}^{(t)})\in(0,r_1),\ldots,\beta_{d-k}(\MP_{k,k}^{(t)})\in(r_{d-k-1},r_{d-k})\}V_j(\MP_{k,k}^{(t)})\,V_k(\MP_{k,k}^{(t)})^{-1}\big]$$
\begin{equation}\label{eq:ProofGeneralBirthTime5}
=\int_0^{r_1}\int_{r_1}^{r_2}\ldots\int_{r_{d-k-1}}^{r_{d-k}}c_3s_{d-k}^{k-j}\,{(d-k)!\over t^{d-k}}\, \dint s_{d-k}\ldots\dint s_2\dint s_1.
\end{equation}
Moreover, according to Lemma \ref{lem:Scaling} b), there is another constant $c_4$, only depending on the dimension parameters $j,k,d$ and on the hyperplane measure $\L$, such that
\begin{equation}\label{eq:ProofGeneralBirthTime6}
{\bE V_k(\MP_{k,0}^{(t)})\over \bE V_j(\MP_{k,0}^{(t)})}=c_4\,t^{j-k}.
\end{equation}
Putting $c_5:=c_3c_4$ and combining \eqref{eq:ProofGeneralBirthTime4} with \eqref{eq:ProofGeneralBirthTime5} and \eqref{eq:ProofGeneralBirthTime6}, we arrive at
\begin{eqnarray*}
& &\bP\big(\beta_1(\MP_{k,j}^{(t)})\in(0,r_1),\ldots,\beta_{d-k}(\MP_{k,j}^{(t)})\in(r_{d-k-1},r_{d-k})\big)\\
&& = \int_0^{r_1}\int_{r_1}^{r_2}\ldots\int_{r_{d-k-1}}^{r_{d-k}}c_5\,(d-k)\,!{s_{d-k}^{k-j}\over t^{d-j}}\, \dint s_{d-k}\ldots\dint s_2\dint s_1.
\end{eqnarray*}
This shows at first that the birth-times of $\MP_{k,j}^{(t)}$ have a joint density, which is given by $$(s_1,\ldots,s_{d-k})\mapsto c_5\,(d-k)!\,{s_{d-k}^{k-j}\over t^{d-j}}{\bf 1}\{0<s_1<\ldots<s_{d-k}<t\}.$$ Since this must integrate to one, we must have $c_5=(d-j)/(d-k)$. This finally proves that $$(s_1,\ldots,s_{d-k})\mapsto (d-j)(d-k-1)!\,{s_{d-k}^{k-j}\over t^{d-j}}{\bf 1}\{0<s_1<\ldots<s_{d-k}<t\}$$ is the joint birth-time density of $\MP_{k,j}^{(t)}$.\hfill $\Box$

\section{Proof of Theorem \ref{thm:GeneralInternalVertices}}\label{sec:proofs2}

We show the formula only for the case of the length-weighted typical maximal segment, the argument for typical maximal segment is similar. The proof follows the idea of the corresponding special cases $d=2$ and $d=3$ in \cite{MNW11,T10,TWN}. The key will be Equation \eqref{eq:keydensity} below, which is simple in the planar case and which has been established in \cite{TWN} for $d=3$ in a long and intricate proof using other methods (parts of which seem to be restricted to small space dimensions). Here, we will employ the power of the distributional identity in Lemma \ref{lem:Vjbirthtime} combined with the result of Theorem \ref{thm:GeneralBirthTime}.

We recall that the length-weighted typical maximal segment of $Y(t)$ is the intersection of $d-1$ maximal polytopes of dimension $d-1$, whose birth-times are denoted by $\beta_1,\ldots,\beta_{d-1}$. Furthermore, let $\ell$ and $\ph$ be the length and the direction of the segment, respectively, where $\ph\in{\cal S}_+^{d-1}$ is the unit vector parallel to the segment. The distribution of $\ph$ is given by a probability measure $\bR$ on ${\cal S}_+^{d-1}$; cf.~\cite[Theorem 1]{HugSchneider11} or \cite[Theorem 4.4.8]{SW}. However, the exact form of $\bR$ is irrelevant for our purposes.

First, by conditioning, we find that
$$\mathsf{p}_{1,1}(n)=\int_0^t\int_0^{s_{d-1}}\ldots\int_0^{s_2}\int_{\cS_+^{d-1}}\int_0^\infty\bP\big(N=n|\ell=x,\ph=u,\beta_1=s_1,\ldots,\beta_{d-1}=s_{d-1}\big)$$ $$\times p_{\ell,\beta_1,\ldots,\beta_{d-1}|\ph=u}(x,s_1,\ldots,s_{d-1})\,\dint x\,\bR(\dint u)\,\dint s_1\ldots\dint s_{d-1},$$
where $N$ is the number of internal vertices and $p_{\ell,\beta_1,\ldots,\beta_{d-1}|\ph=u}$ is the conditional joint density of length $\ell$ and birth-times $\beta_1,\ldots,\beta_{d-1}$, given the direction $u$ of the segment. Two comments are in order. At first, we notice that the joint density $p_{\ell,\ph,\beta_1,\ldots,\beta_{d-k}}$ of length, direction and the $d-k$ birth-times does not need to exist since $\bR$ is not assumed to be absolutely continuous with respect to some reference measure on ${\cal S}_+^{d-1}$. This causes the extra integration with respect to $\bR$ and forces us to condition on $\ph$. It is also worth pointing out that the order of integration in the above expression cannot be changed because of the dependencies among the involved random variables.

We now determine the exact expression for $p_{\ell,\beta_1,\ldots,\beta_{d-1}|\ph=u}$. To start with, write
\begin{equation}\label{eq:proofTheoremVertices1}
\begin{split}
& p_{\ell,\beta_1,\ldots,\beta_{d-1}|\ph=u}(x,s_1,\ldots,s_{d-1})\\
&=p_{\ell|\ph=u,\beta_1=s_1,\ldots,\beta_{d-1}=s_{d-1}}(x)\,p_{\beta_1,\ldots,\beta_{d-1}|\ph=u}(s_1,\ldots,s_{d-1}),
\end{split}
\end{equation}
where $p_{\ell|\ph=u,\beta_1=s_1,\ldots,\beta_{d-1}=s_{d-1}}$ is the conditional density of $\ell$ given that $\ph=u,\beta_1=s_1,\ldots,\beta_{d-1}=s_{d-1}$ and where $p_{\beta_1,\ldots,\beta_{d-1}|\ph=u}$ is the conditional joint density of the birth times $\beta_1,\ldots,\beta_{d-1}$ given the direction $\ph=u$ of the segment. Now, Lemma \ref{lem:independenceKK} applied with $k=1$ shows that $$p_{\ell|\ph=u,\beta_1=s_1,\ldots,\beta_{d-1}=s_{d-1}}(x)=p_{\ell|\ph=u,\beta_{d-1}=s_{d-1}}(x)$$ and that this is the same as the length density of the length-weighted typical edge of the Poisson hyperplane tessellation $\PHT(s_{d-1})$. It is well known (see Example 1.4 in \cite{BaumstarkLast09}) that this length is gamma (Erlang) distributed with parameter $(2,\Lambda(\lk u\rk)s_{d-1})$, whence
\begin{equation}\label{eq:proofTheoremVertices2}
p_{\ell|\ph=u,\beta_1=s_1,\ldots,\beta_{d-1}=s_{d-1}}(x)=\Lambda(\lk u\rk)^2s_{d-1}^2xe^{-\Lambda(\lk u\rk)s_{d-1}x}.
\end{equation}
Moreover, Lemma \ref{lem:independenceKK} also implies that the birth-times are independent of the direction of the segment, i.e.,
\begin{equation}\label{eq:proofTheoremVertices3}
p_{\beta_1,\ldots,\beta_{d-1}|\ph=u}(s_1,\ldots,s_{d-1})=p_{\beta_1,\ldots,\beta_{d-1}}(s_1,\ldots,s_{d-1})={(d-1)!\over t^{d-1}}
\end{equation}
(in fact the direction of $\F_{1,1}^{\PHT(s)}$ does not depend on the intensity $s$; see \cite{HugSchneider11}). Inserting \eqref{eq:proofTheoremVertices2} and \eqref{eq:proofTheoremVertices3} into \eqref{eq:proofTheoremVertices1}, we arrive at
\begin{equation}\label{eq:keydensity}
p_{\ell,\beta_1,\ldots,\beta_{d-1}|\ph=u}(x,s_1,\ldots,s_{d-1})={(d-1)!\over t^{d-1}}\Lambda(\lk u\rk)^2s_{d-1}^2xe^{-\Lambda(\lk u\rk)s_{d-1}x}.
\end{equation}
It remains to determine the conditional probability $$\bP\big(N=n|\ell=x,\ph=u,\beta_1=s_1,\ldots,\beta_{d-1}=s_{d-1}\big).$$ For this, we first notice that for space dimensions $d\ge 3$, the length-weighted typical maximal segment can already have internal vertices at the time of its birth. Suppose now that the length $\ell=x$, the direction $\ph=u$ and corresponding birth-times $\beta_1=s_1,\ldots,\beta_{d-1}=s_{d-1}$ with $0<s_1<\ldots<s_{d-1}<t$ are given. We now gradually reconstruct the internal structure of the segment and exploit the intersection property of STIT tessellations. From time $s_1$ until time $s_2$, behind the first $(d-1)$-dimensional maximal polytope, $\M_1$ say, a number of internal vertices appear, which are Poisson distributed with parameter $\Lambda(\lk u\rk)x(s_2-s_1)$ since the region behind $\M_1$ can be further subdivided during the construction. Next, from time $s_2$ until time $s_3$, behind the first $(d-1)$-dimensional maximal polytope $\M_1$ there is another such polytope $\M_2$ so that internal vertices can be created by further subdivision within $\M_1^+\cap\M_2^+$ and $\M_1^+\cap\M_2^-$, where $\M_1^+$ stands for the half-space behind $\M_1$ (i.e., the last $(d-1)$-dimensional maximal polytope $\M_{d-1}$ appears in $\M_1^-$). Thus, we obtain in the time interval $(s_2,s_3)$ a Poisson distributed number of internal vertices with parameter $2\Lambda(\lk u\rk)x(s_3-s_2)$. In general, from time $s_k$ until time $s_{k+1}$, $1\le k\le d-2$, behind $\M_1$ we have $k-1$ further $(d-1)$-dimensional maximal polytopes $\M_2,\ldots,\M_k$ diving $\M_1^+$ into $k$ regions so that by further subdivision a Poisson distributed number of internal vertices appears, whose parameter is $k\Lambda(\lk u\rk)x(s_{k+1}-s_k)$. During the last time interval $(s_{d-1},t)$, we have not only to consider the number of internal vertices appearing within the $d-2$ regions behind $\M_1$, but also the number of internal vertices which arises in the two cells in
 $\M_1^-$ on the left and on the right of the segment (i.e., in the two regions separated by $\M_{d-1}$). This leads to a Poisson distribution with parameter $d\cdot\Lambda(\lk u\rk)x(t-s_{d-1})$. Adding all the above independent Poisson distributed numbers, we obtain a Poisson distributed number of internal vertices with parameter $\Lambda(\lk u\rk)x(d\cdot t-2s_{d-1}-s_{d-2}-\ldots-s_1)$. Formally, this means that
\begin{equation*}
\begin{split}
&\bP(N=n|\ell=x,\ph=u,\beta_1=s_1,\ldots,\beta_{d-1}=s_{d-1})\\
&={{\big(\Lambda(\lk u\rk)x(d\cdot t-2s_{d-1}-s_{d-2}-\ldots-s_1)\big)^n}\over {n!}}e^{-\Lambda(\lk u\rk)x(d\cdot t-2s_{d-1}-s_{d-2}-\ldots-s_1)}.
\end{split}
\end{equation*}
Thus,
\begin{equation*}
\begin{split}
\mathsf{p}_{1,1}(n) = \int_0^t\int_0^{s_{d-1}}\ldots &\int_0^{s_2}\int_{\cS_+^{d-1}}\int_0^\infty{(d-1)!\over t^{d-1}}\Lambda(\lk u\rk)^2s_{d-1}^2xe^{-\Lambda(\lk u\rk)s_{d-1}x}\\
&\times {{\big(\Lambda(\lk u\rk)x(d\cdot t-2s_{d-1}-s_{d-2}-\ldots-s_1)\big)^n}\over {n!}}\\
&\times e^{-\Lambda(\lk u\rk)x(d\cdot t-2s_{d-1}-s_{d-2}-\ldots-s_1)}\,\dint x\,\bR(\dint u)\,\dint s_1\ldots\dint s_{d-1}.
\end{split}
\end{equation*}
Integrating now with respect to $x$, we see that all terms $\L(\lk u\rk)$ cancel out and we arrive at the desired formula for $\mathsf{p}_{1,1}(n)$.

The value of $\mathsf{p}_{1,1}(n)$ is necessarily independent of $t$ since the number of internal vertices does not change when the tessellation is rescaled. The latter, however, is because of \eqref{eq:STITglobalScaling} the same as a time change, so that $\mathsf{p}_{1,1}(n)$ must be independent of $t$.

The formula for the mean number of internal vertices of the length-weighted typical maximal segment can be obtained from $\displaystyle\sum_{n=0}^\infty n\,\mathsf{p}_{1,1}(n)$ by a straight forward integration procedure.\hfill $\Box$

\end{document}